\newtheorem{theorem}{Theorem}[section]
\newtheorem{prop}{Proposition}[section]
\newtheorem{definition}[theorem]{Definition}
\newtheorem{example}[theorem]{Example}
\newtheorem{remark}[theorem]{Remark}
\numberwithin{equation}{section}
\newtheorem*{theorem*}{Theorem}
\title{APPROXIMATING SINGULARITIES BY A CUSPIDAL-EDGE ON A MAXFACE}
\author{PRADIP KUMAR 
}
\address{Department of Mathematics, Shiv Nadar University, Dadri 201314, Uttar pradesh, India.\\
Email Address: pradip.kumar@snu.edu.in}
\author{SAI RASMI RANJAN MOHANTY}
\address{Department of Mathematics, Shiv Nadar University, Dadri 201314, Uttar pradesh, India.\\
Email Address: sm743@snu.edu.in}
\subjclass[2010]{53A35}
\keywords{Maxface singularities, Cuspidal-edge.}
\date{}
\begin{document}
\maketitle
\begin{abstract}
    We give necessary and sufficient conditions on the singular Bj\"{o}rling data to the singular Bj\"{o}rling problem’s solution has a prescribed nature of singularity. As an application, for a given maxface with a particular type of singularity, we find a sequence of maxfaces, all having a cuspidal edge.
\end{abstract}

\section{Introduction}
Maximal immersions are zero mean curvature immersions in the Lorentz-Minkowski space $\mathbb E_1^3$. These are very similar to the minimal surface in $\mathbb R^3$, but if we allow some singularities (where maps are not immersions), the theory of these two diﬀers. Maximal surfaces with singularity are called generalized maximal surfaces. Singularity on the generalized maximal surface is branched and non-branched. Non-branched singular points are those points where limiting tangent space does not collapse, and it contains a light-like vector. Various aspects of non branched singularities have been studied in \cite{Estudillo1992}, \cite{Fujimori2007}, \cite{imaizumi2008}, \cite{Kim2007}, \cite{KOBAYASHI1984}, \cite{OT2018}, \cite{ UMEHARA2006} etc.

Umehara and Yamada in \cite{UMEHARA2006}, proved that every non branched maximal immersions as a map in $\mathbb R^3$ turn out to be frontal. They called non branched maximal immersions as maxface and discussed when this becomes the front near a singular point. Cuspidal-edge, swallowtails, cuspidal crosscaps, cuspidal butterﬂies, and cuspidal $S_1^-$ are few singularities that appear on a maxface $X$ as front or frontal.

On cuspidal-edges of the front, Saji, Umehara, and Yamada \cite{Saji2009}
  introduced the singular curvature function  which is closely related to the behavior of the Gaussian curvature of a surface near cuspidal edges.  Further, Martins and  Saji  \cite{MARTINS2018209}, study differential geometric properties of cuspidal edges with boundary and given  several differential geometric invariants.   Toshizumi Fukui \cite{FukuiCuspidaledge} also studied the  local differential geometry of cuspidal edges.

Not all maxfaces are front  but if we approximate singularities on maxfaces with a cuspidal-edge  (that is the first kind of singularities of fronts)  then it may help us to understand the existence of some invariant related to other types of singularities as we have  for the cuspidal-edge (\cite{FukuiCuspidaledge}, \cite{MARTINS2018209},\cite{Saji2009}).   This idea motivates us to  approximate singularities by  a cuspidal-edge.  

In this article in section 4, we construct a  sequence (there may be many others) of maxfaces with a cuspidal edge that ``converges" to other singularities  like shrinking or folded or as in the table \ref{intro:tab}.    To prove this,  we shall give necessary and suﬃcient conditions on the singular Björling data $\{\gamma,L\}$ such that it has a cuspidal-edge at some point.

Along with the cuspidal edge, in this article, we ﬁnd the necessary and suﬃcient conditions on the singular Björling data $\{\gamma,L\}$ such that its corresponding maxface has swallowtails, cuspidal crosscaps, cuspidal butterﬂies, and cuspidal $S_1^-$. We summarize the conditions (given in the propositions: \ref{proposition_cuspidaledge}, \ref{proposition_swallowtails}, and \ref{propsition_crosscaps} ) here in the table \ref{intro:tab}.

\begin{table}[h]
\small
\begin{tabular}{|c |c| c| c| c| c| c|c|c| }\hline
\diagbox[width=4cm]{{Nature of}\\ {singularities at $p$}}{{Function's value}\\ at $p$}&$\gamma^\prime$&$L$&$\gamma^{\prime\prime}$&$\gamma^{\prime\prime\prime}$&$L^\prime$&$L^{\prime\prime}$&$\gamma_1^{\prime}\gamma_2^{\prime\prime}-\gamma_1^{\prime\prime}\gamma_2^{\prime}$&$L_1L_2^{\prime}-L_1^{\prime}L_2$\\
\hline
Cuspidal-edge&$\neq0$&$\neq0$&--&--&--&--&$\neq 0$&$\neq 0$\\
\hline
Swallowtails&$=0$&$\neq0$&$\neq0$&--&--&--&--&$\neq 0$\\
\hline
Cuspidal butterflies&$=0$&$\neq 0$&$=0$&$\neq 0$&--&--&--&$\neq 0$\\
\hline
Cuspidal $S_1^-$&$\neq0$&$=0$&--&--&$=0$&$\neq 0$&$\neq 0$&--\\
\hline
Cuspidal-Crosscaps&$\neq0$&$=0$&--&--&$\neq 0$&--&$\neq 0$&--\\
\hline
\end{tabular}
\caption{}
\label{intro:tab}
\end{table}

We believe the above table is very useful and apart from finding required convergent sequence, it may be a starting point of studying suitable interpolation problem (finding a maxface containing two  disjoint curves with prescribed nature of singularities along the curve). In \cite{LOPEZ20072178}, L\'{o}pez  discussed a kind of interpolation problem where he proves the existence of maximal immersion (not the generalized)  spanning two disjoint circular
contours. Some discussion about finding maxface with two interpolating singular curve can be found in \cite{RPR2016}. But we believe, a general discussion requires an initial setup like the conditions as in the table \ref{intro:tab}. In article \cite{brander_2011}, David Brander has discussed similar conditions for the case of the non-maximal CMC surfaces with the special data. 

The discussion of this article is close to \cite{brander_2011}, \cite{Kim2007}, and \cite{UMEHARA2006}. 

\section{Preliminary}
This section reviews the deﬁnition of maxface, Weierstrass-Enneper representation, and the singular Björling problem. 

The Lorentz-Minkowski space $\mathbb E_1^3$ is a vector space $\mathbb R^3$ with metric $\langle, \rangle : \mathbb R^3\times\mathbb R^3 \to \mathbb R$ deﬁned by $\langle(a_1,b_1,c_1),(a_2,b_2,c_2)\rangle := a_1a_2 + b_1b_2 -c_1c_2$ and the generalised maximal immersion is an immersion of a Riemann surface (with boundary) $M$ to $\mathbb E_1^3$, such that pullback metric on $M$ does not vanish identically and it is positive definite wherever metric does't vanish. Moreover at non degenerate points mean curvature is zero.  Maxfaces are those generalized maximal immersions where singularities are only those points of $M$ where the limiting tangent plane has a light-like vector. We have the following representation of the maxface.

\subsection{Weierstrass-Enneper representation \cite{UMEHARA2006}} For a maxface $X : M \to\mathbb E_1^3$, there is a pair $(g,\omega)$ of meromorphic function and a holomorphic 1-form on $M$ such that  $|g|$ is not identically equal 1 and for  $\Phi := (1 + g^2,i(1-g^2),-2g)\omega$, the map $X$ is given by $X(p) := Re \int_{0}^{p}\Phi.$ 

For a maxface, with the help of Weierstrass data $(g,\omega)$, below we define functions $\alpha, \beta,$ and $\eta$ as in  \cite{Fujimori2007}, \cite{OT2018}, and \cite{UMEHARA2006}. 
\begin{definition}
\label{alpha_beta_gamma}
At  $p\in M,$  let  $(U,z)$ be a coordinate chart and $\omega= f\,dz$, we define 
   $$ \alpha(z)=\frac{g^{\prime}(z)}{g^2(z)f(z)},\; \beta(z)=\frac{g(z)}{g^{\prime}(z)}\alpha^{\prime}(z),\,\, { \rm and }\,\, \eta(z)=\frac{g(z)}{g^{\prime}(z)}\beta^{\prime}(z). $$
\end{definition}
These functions help us to check the nature of singularity on a maxface. In \cite{Fujimori2007}, \cite{OT2018}, and \cite{UMEHARA2006}, we find  criterion to check the nature of singularity. We mention it here. 

%\begin{center}
%\vspace{-0.6cm}
\begin{table}[h]
\begin{tabular}{|c c c c c|}
\hline
  Re$(\alpha)\neq 0$ & Im$(\alpha)\neq 0$ & & &\hspace{-1cm}$\Leftrightarrow p$ is a cuspidal-edge\\\hline
  Re$(\alpha)\neq 0$ &Im$(\alpha)= 0$ & Re $(\beta)\neq 0$& &\hspace{-1cm}$\Leftrightarrow p$ is a swallowtails\\\hline
Re$(\alpha)\neq 0$ & Im$(\alpha)= 0$ & Re $(\beta)= 0$ &Im$(\eta)\neq 0$&$\Leftrightarrow p$ is a cuspidal butterlflies\\\hline
Re$(\alpha)=0$ & Im$(\alpha)\neq 0$ & Im$(\beta)= 0$ &Re$(\eta)\neq 0$&$\Leftrightarrow p$ is a cuspidal $S_1^-$\\\hline
Re$(\alpha)= 0$ & Im$(\alpha)\neq  0$ & Im $(\beta)\neq 0$& &$\Leftrightarrow p$ is a cuspidal crosscaps\\\hline
\end{tabular}
\caption{}
\end{table}
%\end{center}
%\end{minipage}
%end{center}
\subsection{Singular Björling problem \cite{Kim2007}} We explain the singular Bj\"{o}rling problem in the following.
\begin{definition}(Singular Björling data \cite{Kim2007}).
\label{bjorling_data}
Let $\gamma : I\to\mathbb E_1^3$ be a real analytic null curve and $L : I\to\mathbb E_1^3$ be a real analytic null vector ﬁeld such that for all $u \in I$, $\gamma^\prime (u)$ and $L(u)$ are proportional,  and $\gamma^{\prime}(u)$ and $L(u)$ do not vanish simultaneously.  Such $\{\gamma,L\}$ is said to be a singular Bj\"{o}rling data.
\end{definition}  
If the analytic extension of the function $g : I \to \mathbb C$, 
\begin{equation}
\label{gauss_map}
g(u):=\begin{cases}
\dfrac{L_1+iL_2}{L_3};\; \text{ if } \gamma' \text{ vanishes identically}\\
\dfrac{{\gamma_1}'+i{\gamma_2}'}{{\gamma_3}'};\; \text { if }  L \text{ vanishes identically}
\end{cases} 
\end{equation}
satisfies $|g(z)| \not \equiv 1$
on some simply connected domain $\Omega \subset \mathbb C$, where $z = u + iv \in \Omega$ and $I\subset \Omega$. Then there is a unique generalized maximal immersion  $X:\Omega\to \mathbb E_1^3$ is given by (for $u_0\in I$ fixed),
$
X(z)= \gamma(u_0)+ {Re}\left(\int_{u_0}^z (\gamma^{\prime}(w)-i L(w)) dw \right)
$
such that $X(u,0) = \gamma(u)$ and $X_v(u,0) = L(u)$. Moreover it has singularity set at least $I$.  After a translation we can assume that $\gamma(u_0)=0$, so we consider the solution as 
\begin{equation}
\label{bjorling_solution}
X_{\gamma,L}(z)= {Re}\left(\int_{u_0}^z (\gamma^{\prime}(w)-i L(w)) dw \right).
\end{equation}

The way singular Bj\"{o}rling data is taken, the singularity set contains an interval $I$ and for all $u$,  $\gamma^{\prime}(u)$ and $L(u)$ do not vanish simultaneously.   Therefore $X_{\gamma, L}$ turns out to be  a maxface in a neighborhood of singular points. In fact the Weierstrass data for the maxface as in the equation  \ref{bjorling_solution} is given by the analytic extension of $f(u) = (\gamma_1^\prime-iL_1)-i(\gamma_2^\prime-iL_2)$ and $g$ as in the equation \ref{gauss_map}. 
\section{Necessary and sufficient conditions on the singular Björling data for prescribed type of singularity }

 In  this section, we will calculate $\alpha, \beta$, and $\eta$ as in the definition \ref{alpha_beta_gamma} for the maxface $X_{\gamma, L}$ at some singularity $t_0\in I$.  We get necessary and suﬃcient conditions on  $\{\gamma,L\}$ so that $X_{\gamma,L}$ have a cuspidal-edge, swallowtails, cuspidal cross caps, cuspidal butterﬂies and cuspidal $S_1^-$ singularities.
\subsection{For cuspidal-edge at $u\in I$.} 
Let $\{\gamma,L\}$ be the singular Björling data as in the deﬁnition \ref{bjorling_data}. With the Gauss map as in the equation \ref{gauss_map}, we calculate $\alpha$ as in the definition \ref{alpha_beta_gamma}. 

At $u\in I,$ if $\gamma^{\prime}(u) \neq 0$, then there exist a real number $c$ such that $L(u) = c\gamma^{\prime}(u)$. So that $f(u) = (1-ic)(\gamma_1^{\prime}(u)-i\gamma_2^{\prime}(u))$ and $g(u) =\dfrac{\gamma_1^{\prime}(u)+i\gamma_2^{\prime}(u)}{\gamma_3^{\prime}(u)}$. In this case, we have $\alpha(u) = \dfrac{g^{\prime}}{g^2f}(u)=\dfrac{g^{\prime}}{g(1-ic)\gamma_3^{\prime}}(u)$. Here replacing the value of f and g we get
$$
\alpha(u)=\frac{\gamma_3^\prime(\gamma_1^{\prime\prime}+i\gamma_2^{\prime\prime})-(\gamma_1^\prime+i\gamma_2^\prime)\gamma_3^{\prime\prime}}{\gamma_3^{\prime 2}(1-ic)\gamma_3^\prime}\frac{\gamma_3^\prime}{\gamma_1^\prime+i\gamma_2^\prime}=-\frac{\gamma_3^{\prime\prime}}{\gamma_3^{\prime 2}(1-ic)}+\frac{(\gamma_1^{\prime\prime}+i\gamma_2^{\prime\prime})(\gamma_1^\prime-i\gamma_2^\prime)}{\gamma_3^\prime(1-ic)(\gamma_1^{\prime 2}+\gamma_2^{\prime 2})}(u).
$$
That is we have, 
$$
\alpha(u)=\frac{1}{\gamma_3^{\prime 3}(1-ic)}[-\gamma_3^{\prime\prime}\gamma_3^\prime+\gamma_1^{\prime\prime}\gamma_1^\prime+\gamma_2^{\prime\prime}\gamma_2^\prime+i(\gamma_1^\prime\gamma_2^{\prime\prime}-\gamma_1^{\prime\prime}\gamma_2^\prime)](u).
$$
We denote:
\begin{equation}
    D(\gamma_{12}^\prime, \gamma_{12}^{\prime\prime}):=\gamma_1^\prime\gamma_2^{\prime\prime}-\gamma_1^{\prime\prime}\gamma_2^\prime;\quad D(L_{12}, L_{12}^\prime):=L_1L_2^\prime-L_2L_1^\prime.
\end{equation}
So that $\alpha(u)=i\dfrac{D(\gamma_{12}^\prime, \gamma_{12}^{\prime\prime})}{\gamma_3^{\prime 3}(1-ic)}$. Moreover $\gamma$ is a null curve and $c=\dfrac{L_3}{\gamma_3^\prime}$, therefore at $u$, when $\gamma^\prime(u)\neq 0$, we get:
\begin{equation}
    \alpha(u)=-\frac{L_3D(\gamma_{12}^\prime, \gamma_{12}^{\prime\prime})}{(\gamma_3^{\prime 2}+L_3^2)\gamma_3^{\prime 2}}+i\frac{D(\gamma_{12}^\prime, \gamma_{12}^{\prime\prime})}{(\gamma_3^{\prime 2}+L_3^2)\gamma_3^{\prime }}.
\end{equation}
Similarly,  for the case when $L(u)\neq 0$, we get the following:
\begin{equation}
\label{cuspidal_edge}
    \alpha(u)=-\frac{D(L_{12}, L_{12}^\prime)}{(\gamma_3^{\prime 2}+L_3^2)L_3}+i\frac{\gamma_3^\prime D(L_{12}, L_{12}^\prime)}{(\gamma_3^{\prime 2}+L_3^2)L_3^2}.
\end{equation}
We know, $u$ is a cuspidal-edge for the maxface if and only if $Re(\alpha)$ and $Im(\alpha)$ at $u$ are non zero. 

Therefore at those points $u\in I$, where $\gamma^\prime\neq 0$ and $u$ is cuspidal-edge for the maxface (as in the equation \ref{bjorling_solution}), we must have $\gamma^\prime\neq 0,L\neq 0$ and $D(\gamma_{12}^\prime,\gamma_{12}^{\prime\prime})\neq 0$ at $u$ and this implies $D(L_{12},L_{12}^\prime) \neq 0$ at $u$. 

On the other hand, at those points $u \in I$, where $L\neq 0$ and $u$ is a cuspidal-edge, we must have $\gamma^\prime\neq 0, L\neq 0$ and $D(L_{12},L_{12}^\prime) \neq 0$ at $u$, and this implies $D(\gamma_{12}^\prime,\gamma_{12}^{\prime\prime}) \neq 0$ at $u$. 

This proves the following;
\begin{prop}
\label{proposition_cuspidaledge}
Let $\{\gamma,L\}$ be the singular Bj\"{o}rling data.  Then the maxface $X_{\gamma,L}$ as in the equation \ref{bjorling_solution} has cuspidal-edge at $u\in I$, if and only if at $u$, $\gamma^\prime\neq 0, L\neq 0$ and $D(\gamma_{12}^\prime,\gamma_{12}^{\prime\prime}) \neq 0$ or $D(L_{12},L_{12}^\prime) \neq 0$. 
\end{prop}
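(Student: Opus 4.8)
The plan is to reduce the statement to the first-row criterion recalled in the singularity table of Section~2, namely that $u$ is a cuspidal-edge of $X_{\gamma,L}$ precisely when $\mathrm{Re}(\alpha)(u)\neq 0$ and $\mathrm{Im}(\alpha)(u)\neq 0$, and then to read these two conditions off the explicit expressions for $\alpha$ already obtained above. Two preliminary observations organise the argument. First, by the non-simultaneous-vanishing requirement of Definition~\ref{bjorling_data}, at each $u\in I$ at least one of $\gamma^\prime(u)$, $L(u)$ is nonzero, so every point falls into (at least) one of the two cases in which $\alpha$ was computed. Second, because $\gamma$ is a null curve and $L$ a null vector field, $\gamma_1^{\prime 2}+\gamma_2^{\prime 2}=\gamma_3^{\prime 2}$ and $L_1^2+L_2^2=L_3^2$; hence $\gamma^\prime(u)\neq 0\iff\gamma_3^\prime(u)\neq 0$ and $L(u)\neq 0\iff L_3(u)\neq 0$. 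These equivalences are what let me trade the vanishing of the third components appearing in the formulas for the coordinate-free conditions $\gamma^\prime\neq 0$ and $L\neq 0$ in the statement.

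In the case $\gamma^\prime(u)\neq 0$ I would start from the expression derived above, $\alpha=-\frac{L_3 D(\gamma_{12}^\prime,\gamma_{12}^{\prime\prime})}{(\gamma_3^{\prime 2}+L_3^2)\gamma_3^{\prime 2}}+i\frac{D(\gamma_{12}^\prime,\gamma_{12}^{\prime\prime})}{(\gamma_3^{\prime 2}+L_3^2)\gamma_3^{\prime}}$. Here $\gamma_3^\prime\neq 0$, so both denominators are nonzero, and the real and imaginary parts can be read off directly: $\mathrm{Im}(\alpha)\neq 0\iff D(\gamma_{12}^\prime,\gamma_{12}^{\prime\prime})\neq 0$, while $\mathrm{Re}(\alpha)\neq 0\iff L_3\neq 0$ and $D(\gamma_{12}^\prime,\gamma_{12}^{\prime\prime})\neq 0$. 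Thus both parts are nonzero exactly when $L_3\neq 0$ and $D(\gamma_{12}^\prime,\gamma_{12}^{\prime\prime})\neq 0$, i.e.\ (using the null condition) exactly when $L\neq 0$ and $D(\gamma_{12}^\prime,\gamma_{12}^{\prime\prime})\neq 0$. Note that the cuspidal-edge condition forces $L\neq 0$ on its own, even though we only assumed $\gamma^\prime\neq 0$ to enter this case.

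The case $L(u)\neq 0$ is handled symmetrically from Equation~\ref{cuspidal_edge}: with $L_3\neq 0$ the denominators there do not vanish, so $\mathrm{Re}(\alpha)\neq 0\iff D(L_{12},L_{12}^\prime)\neq 0$ and $\mathrm{Im}(\alpha)\neq 0\iff\gamma_3^\prime\neq 0$ and $D(L_{12},L_{12}^\prime)\neq 0$. Hence $u$ is a cuspidal-edge iff $\gamma_3^\prime\neq 0$ and $D(L_{12},L_{12}^\prime)\neq 0$, i.e.\ $\gamma^\prime\neq 0$ and $D(L_{12},L_{12}^\prime)\neq 0$; here the condition forces $\gamma^\prime\neq 0$ by itself.

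It remains to glue the two cases, and this is the step I expect to carry the real content. Both analyses show that a cuspidal-edge forces \emph{both} $\gamma^\prime(u)\neq 0$ and $L(u)\neq 0$, so the relevant points lie in the overlap of the two cases; there $L(u)=c\,\gamma^\prime(u)$ with $c=L_3(u)/\gamma_3^\prime(u)\neq 0$. Differentiating $L_1=c\gamma_1^\prime$, $L_2=c\gamma_2^\prime$ and substituting, the $c^\prime$-terms cancel and one obtains $D(L_{12},L_{12}^\prime)=c^2 D(\gamma_{12}^\prime,\gamma_{12}^{\prime\prime})$. Since $c\neq 0$ on the overlap, $D(\gamma_{12}^\prime,\gamma_{12}^{\prime\prime})\neq 0\iff D(L_{12},L_{12}^\prime)\neq 0$, which reconciles the two characterizations and justifies the ``or'' in the statement: under $\gamma^\prime\neq 0$ and $L\neq 0$ the two determinant conditions are equivalent, so requiring either one amounts to the same thing. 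Collecting the cases then yields exactly the stated equivalence. The only places demanding care are this last identity $D(L_{12},L_{12}^\prime)=c^2 D(\gamma_{12}^\prime,\gamma_{12}^{\prime\prime})$ --- tracking the $c^\prime$ contributions and confirming they drop out --- together with the bookkeeping that each case silently upgrades its single nonvanishing hypothesis to the nonvanishing of both $\gamma^\prime$ and $L$.
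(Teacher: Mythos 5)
Your proposal is correct and follows essentially the same route as the paper: both reduce to the criterion $\mathrm{Re}(\alpha)\neq 0$, $\mathrm{Im}(\alpha)\neq 0$ and read these off from the two displayed formulas for $\alpha$ in the cases $\gamma^\prime(u)\neq 0$ and $L(u)\neq 0$. The only difference is that you make explicit the identity $D(L_{12},L_{12}^\prime)=c^2\,D(\gamma_{12}^\prime,\gamma_{12}^{\prime\prime})$ behind the equivalence of the two determinant conditions, which the paper merely asserts with ``this implies''; that identity is correct, so your gluing step is sound.
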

This proposition has many applications, in particular, we will use it to prove the theorem \ref{main_theorem}. Moreover, constructing examples having cuspidal-edge singularity turns out be handy.
\begin{example}
Let $\gamma(u)=(sin\,u,- cos\,u, u)$ and $L(u)=u(cos\,u, sin\,u, 1)$ on $I=(0,1)$. Then we have  $L(u)=u\gamma^\prime(u)$ and $D(\gamma_{12}^\prime,\gamma_{12}^{\prime\prime})=1$.

It is clear that $\gamma^\prime\neq 0,\; L\neq0$ and $D(\gamma_{12}^\prime,\gamma_{12}^{\prime\prime})\neq 0$ on $(0,1)$. So that all are cuspidal-edge on $(0,1)$.
\end{example}
\begin{example}
Let $\gamma(u)=(u-\frac{u^3}{3}, u^2, u+\frac{u^3}{3})$ and $L(u)=u^2(1-u^2,2u, 1+u^2)$ on $I=(0,1)$. Then $L(u)=u^2\gamma^\prime(u)$ and $D(\gamma_{12}^\prime,\gamma_{12}^{\prime\prime})=2(u^2+1)$. It is clear that $\gamma^\prime\neq 0, L\neq0$ and $D(\gamma_{12}^\prime,\gamma_{12}^{\prime\prime})\neq 0$ on $(0,1)$. Hence all points are cuspidal-edge on $(0,1)$.
\end{example}

In the following, we will ﬁnd necessary and suﬃcient conditions on the singular Bj\"{o}rling data such that the  maxface (as in the equation \ref{bjorling_solution}) has swallowtails, cuspidal cross-caps etc, at $u \in I$. 
\subsection{For Swallowtails and cuspidal butterﬂies at $u$.}
If $L\neq 0$ at $u$, then $\gamma^\prime= dL$, where $d$ is a function in a neighborhood of $u$. In this case, from the equation \ref{cuspidal_edge}, we have $\alpha=i\dfrac{D(L_{12}, L_{12}^\prime)}{(d-i)L_3^3}$ therefore $\alpha^\prime=i\frac{D(L_{12}, L_{12}^{\prime\prime})(d-i)L_3^3-D(L_{12}, L_{12}^\prime)(d^\prime L_3^3+3(d-i)L_3^2L_3^\prime)}{(d-i)^2L_3^6}$.

This gives
\begin{equation}
    \beta =\frac{\alpha^\prime}{(d-i)L_3\alpha}=\frac{D(L_{12}, L_{12}^{\prime\prime})(d-i)L_3^3-D(L_{12}, L_{12}^\prime)(d^\prime L_3^3+3(d-i)L_3^2L_3^\prime)}{(d-i)^2L_3^4D(L_{12}, L_{12}^\prime)},
\end{equation}
\begin{equation} \label{beta^prime}
\begin{split}
    \beta^\prime&=\frac{D(L_{12}, L_{12}^{\prime})(d-i)L_3(D(L_{12}, L_{12}^{\prime\prime})+D(L_{12}, L_{12}^{\prime\prime\prime}))}{(d-i)^2L_3^2D^2(L_{12}, L_{12}^\prime)}\\
    &-\frac{D(L_{12}, L_{12}^{\prime\prime})((d-i)L_3^\prime D(L_{12}, L_{12}^\prime)+(d-i) L_3D(L_{12}, L_{12}^{\prime\prime})+d^\prime L_3D(L_{12}, L_{12}^{\prime}))}{(d-i)^2L_3^2D^2(L_{12}, L_{12}^\prime)}\\
    &-\frac{(d-i)^2L_3^2(d^{\prime\prime}L_3+4d^\prime L_3^\prime+3(d-i)L_3^{\prime\prime})}{(d-i)^4L_3^4}\\
    &+\frac{(d^\prime L_3+3(d-i)L_3^\prime)(2(d-i)d^\prime L_3^2+2(d-i)^2L_3L_3^\prime)}{(d-i)^4L_3^4}.
    \end{split}
\end{equation}
Moreover
\begin{equation} \label{eta}
    \eta=\frac{g}{g^\prime}\beta^\prime=\frac{\beta^\prime}{(d-i)L_3\alpha}=-i\frac{\beta^\prime L_3^2}{D(L_{12}, L_{12}^\prime)}.
\end{equation}
We know  $X_{\gamma,L}$  has swallowtails at $u\in I$ if and only if at $u$, $Re\,\alpha\neq 0, Im\,\alpha=0$ and $Re\,\beta\neq0$.

The first two conditions $Re\,\alpha\neq 0$ and $Im\,\alpha=0$ at $u$ if and only if at $u$, $D(L_{12}, L_{12}^\prime)\neq 0, L\neq 0$ and $\gamma^\prime=0$. Since at $u$, $d=\dfrac{\gamma^\prime}{L}=0$,
$$
\beta =\frac{D(L_{12}, L_{12}^{\prime})d^\prime L_3^3+i(D(L_{12}, L_{12}^{\prime\prime})L_3^3-3D(L_{12},L_{12}^\prime)L_3^2L_3^\prime)}{L_3^4D(L_{12}, L_{12}^\prime)}.
$$
Therefore at $u$, $Re\, \beta \neq 0$ if and only if $d^\prime \neq 0$ at $u$. That is $Re \,\beta \neq0$ if and only if $\gamma^{\prime\prime}\neq 0$ at $u$.

On the other hand,  $X_{\gamma,L}$ as in the equation \ref{bjorling_solution} has a cuspidal butterflies at $u\in I$ if and only if at $u$, $Re\,\alpha\neq0, Im\, \alpha=0, Re\,\beta=0$ and $Im\,\eta\neq 0$.

The first two conditions hold at $u$ if and only if at $u$, $D(L_{12}, L_{12}^\prime)\neq0, L\neq0,\gamma^\prime=0$ and $d^\prime=0$. Since at $u$, $d=0$ and $Re \,\beta =0$ if and only if $d^\prime=0$, we get from the equation \ref{beta^prime} and \ref{eta}
\begin{align*}
    \eta=\frac{D(L_{12}, L_{12}^{\prime})(D(L_{12}^\prime, L_{12}^{\prime\prime})+D(L_{12}, L_{12}^{\prime\prime\prime}))}{D^2(L_{12}^\prime, L_{12}^{\prime})}&+\frac{D(L_{12}^\prime, L_{12}^{\prime\prime})(L_3^\prime D(L_{12}^\prime, L_{12}^{\prime})+L_3 D(L_{12}^\prime, L_{12}^{\prime\prime}))}{D^2(L_{12}^\prime, L_{12}^{\prime})}\\
&-i\frac{L_3(d^{\prime\prime}L_3-3iL_3^{\prime\prime})+6iL_3^{\prime 2}}{L_3D(L_{12}^\prime,L_{12}^\prime)}.
\end{align*}
Therefore  at $u, Im\,\eta\neq 0$ if and only if $d^{\prime\prime}\neq 0$. That implies at $u$, $Im\,\eta \neq0$ if and only if $\gamma^{\prime\prime\prime}\neq0$. So we have the following: 
\begin{prop}
\label{proposition_swallowtails}
$X_{\gamma,L}$ has swallowtails at $u$ if and only if at $u$, $\gamma^\prime=0, \gamma^{\prime\prime}\neq0, L\neq0$, and $D(L_{12},L_{12}^\prime)\neq0$.   On the other hand, $X_{\gamma,L}$ has cuspidal butterflies at $u$ if and only if $\gamma^\prime=0, \gamma^{\prime\prime}=0,\gamma^{\prime\prime\prime}\neq0, L\neq0$, and $D(L_{12},L_{12}^\prime)\neq0$.
\end{prop}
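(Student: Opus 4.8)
The plan is to handle both statements at once, since each singularity type in question is characterised by the criteria recalled in Section 2 via $\mathrm{Re}(\alpha)\neq 0$ together with $\mathrm{Im}(\alpha)=0$, and in particular forces $L(u)\neq 0$. First I would fix the regime $L(u)\neq 0$ and use the proportionality built into the singular Bj\"{o}rling data (Definition \ref{bjorling_data}) to write $\gamma^\prime=dL$ for a real-analytic scalar function $d$ near $u$; since $L$ is a nonzero null vector we also have $L_3\neq 0$. Substituting $\gamma_3^\prime=dL_3$ into the cuspidal-edge formula (\ref{cuspidal_edge}) collapses it to $\alpha=i\dfrac{D(L_{12},L_{12}^\prime)}{(d-i)L_3^3}$, the expression already recorded above.

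Next I would rationalise $1/(d-i)=(d+i)/(d^2+1)$ to read off $\mathrm{Re}(\alpha)=-\dfrac{D(L_{12},L_{12}^\prime)}{L_3^3(d^2+1)}$ and $\mathrm{Im}(\alpha)=\dfrac{d\,D(L_{12},L_{12}^\prime)}{L_3^3(d^2+1)}$. As $L_3\neq 0$, the two conditions $\mathrm{Re}(\alpha)\neq 0$ and $\mathrm{Im}(\alpha)=0$ are therefore jointly equivalent to $D(L_{12},L_{12}^\prime)\neq 0$ and $d(u)=0$; and $d(u)=0$ is the same as $\gamma^\prime(u)=0$ because $L(u)\neq 0$. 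This already secures three of the four conditions in each statement, and it explains why the relevant determinant is $D(L_{12},L_{12}^\prime)$ rather than $D(\gamma_{12}^\prime,\gamma_{12}^{\prime\prime})$, the latter vanishing automatically once $\gamma^\prime=0$.

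For the swallowtail case I would then impose $d=0$ in the displayed formula for $\beta$, which leaves $\mathrm{Re}(\beta)=d^\prime/L_3$, so that $\mathrm{Re}(\beta)\neq 0\iff d^\prime\neq 0$. Differentiating $\gamma^\prime=dL$ once and evaluating at $u$ (where $d=0$) gives $\gamma^{\prime\prime}=d^\prime L$, whence $d^\prime\neq 0\iff\gamma^{\prime\prime}\neq 0$; this yields the stated characterisation. For cuspidal butterflies I would instead set $d=0$ and $d^\prime=0$ (so that $\mathrm{Re}(\beta)=0$), substitute into (\ref{beta^prime}) and (\ref{eta}), and extract $\mathrm{Im}(\eta)$; after distributing the $-i$ the only surviving imaginary contribution is proportional to $\dfrac{d^{\prime\prime}L_3}{D(L_{12},L_{12}^\prime)}$, so $\mathrm{Im}(\eta)\neq 0\iff d^{\prime\prime}\neq 0$. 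Differentiating $\gamma^\prime=dL$ twice at $u$ (where now $d=d^\prime=0$) gives $\gamma^{\prime\prime\prime}=d^{\prime\prime}L$, hence $d^{\prime\prime}\neq 0\iff\gamma^{\prime\prime\prime}\neq 0$.

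I expect the main obstacle to be the bookkeeping in the butterfly step: the formulas (\ref{beta^prime}) and (\ref{eta}) contain many terms, and I must verify that on the locus $d=d^\prime=0$ every term other than the $d^{\prime\prime}$-term is real and so contributes nothing to $\mathrm{Im}(\eta)$. This is exactly where the successive vanishing conditions $d=0$ and then $d^\prime=0$ are needed to annihilate the cross terms produced by Leibniz-differentiating $\gamma^\prime=dL$; confirming that this collapse is exact, rather than merely plausible, is the crux. By comparison, the derivative identities $\gamma^{\prime\prime}=d^\prime L$ and $\gamma^{\prime\prime\prime}=d^{\prime\prime}L$, together with the equivalence of the determinant conditions, are routine.
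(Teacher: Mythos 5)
Your proposal follows essentially the same route as the paper: reduce to the regime $L(u)\neq 0$ with $\gamma^\prime=dL$, read off from $\alpha=i\,D(L_{12},L_{12}^\prime)/((d-i)L_3^3)$ that $\mathrm{Re}(\alpha)\neq 0$, $\mathrm{Im}(\alpha)=0$ amounts to $D(L_{12},L_{12}^\prime)\neq 0$ and $d=0$, then evaluate $\beta$ at $d=0$ (giving $\mathrm{Re}(\beta)=d^\prime/L_3$) and $\eta$ at $d=d^\prime=0$ (giving $\mathrm{Im}(\eta)$ proportional to $d^{\prime\prime}$), and convert via $\gamma^{\prime\prime}=d^\prime L$ and $\gamma^{\prime\prime\prime}=d^{\prime\prime}L$. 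The computations you outline, including the collapse of all other terms of $\eta$ into its real part on the locus $d=d^\prime=0$, are exactly the ones carried out in the paper's displayed formulas.
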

Similar calculation gives the following.
\begin{prop}
\label{propsition_crosscaps}
$X_{\gamma,L}$ has cuspidal cross caps at $u$ if and only if at $u$, $\gamma^\prime\neq0, L=0, L^\prime\neq 0$ and $D(\gamma_{12}^\prime,\gamma_{12}^{\prime\prime})\neq0$. And $X_{\gamma,L}$ has cuspidal $S_1^-$ at $u$ if and only if at $u$, $\gamma^\prime\neq0, L=0, L^\prime=0, L^{\prime\prime}\neq0$ and $D(\gamma_{12}^\prime,\gamma_{12}^{\prime\prime})\neq0$.
\end{prop}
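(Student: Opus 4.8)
The plan is to mirror the swallowtails/cuspidal-butterflies computation of the previous subsection, but now working at a point $u\in I$ where $\gamma'(u)\neq 0$ instead of $L(u)\neq 0$. Since the singular Björling data forces $\gamma'$ and $L$ to be proportional (Definition \ref{bjorling_data}), near such a $u$ I would write $L=c\,\gamma'$ for a real-analytic function $c=L_3/\gamma_3'$. This is the exact counterpart of the substitution $\gamma'=d\,L$ used for swallowtails, with $c$ in the role of $d$; the one structural change is that the factor $(1-ic)$ now replaces $(d-i)$, and it is precisely this change that interchanges the roles of the real and imaginary parts and sends the swallowtail/butterfly criteria to the cross-cap/$S_1^-$ criteria.

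First I would record, directly from the cuspidal-edge computation, that
\[
\alpha(u)=i\,\frac{D(\gamma_{12}^\prime,\gamma_{12}^{\prime\prime})}{\gamma_3^{\prime 3}(1-ic)}.
\]
Using the null identity $\gamma_1^{\prime 2}+\gamma_2^{\prime 2}=\gamma_3^{\prime 2}$ one has $g^2f=g(1-ic)\gamma_3^\prime$, hence $g/g^\prime=\big(\alpha(1-ic)\gamma_3^\prime\big)^{-1}$, so that $\beta=\dfrac{\alpha^\prime}{\alpha(1-ic)\gamma_3^\prime}$ and $\eta=\dfrac{\beta^\prime}{\alpha(1-ic)\gamma_3^\prime}$, exactly the swallowtail formulas with $L_3$ replaced by $\gamma_3^\prime$ and $(d-i)$ by $(1-ic)$. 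Evaluating at the singular value where $c=0$ gives $\alpha(u)=i\,D(\gamma_{12}^\prime,\gamma_{12}^{\prime\prime})/\gamma_3^{\prime 3}$, which is purely imaginary; thus $Re\,\alpha=0$ and $Im\,\alpha\neq 0$ at $u$ if and only if $c(u)=0$, equivalently $L(u)=0$, together with $D(\gamma_{12}^\prime,\gamma_{12}^{\prime\prime})(u)\neq 0$.

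Next I would differentiate. Evaluating $\beta$ at $c=0$, the only surviving imaginary contribution comes from the $c^\prime$-term, so that $Im\,\beta(u)$ equals $c^\prime/\gamma_3^\prime$ up to the nonzero factor $1/\gamma_3^\prime$. Since $L=c\gamma^\prime$ yields $L^\prime(u)=c^\prime(u)\gamma^\prime(u)$ when $c(u)=0$ and $\gamma^\prime(u)\neq 0$, we obtain $Im\,\beta(u)\neq 0\iff c^\prime(u)\neq 0\iff L^\prime(u)\neq 0$. Combined with the cuspidal cross-cap criterion ($Re\,\alpha=0,\ Im\,\alpha\neq 0,\ Im\,\beta\neq 0$), this proves the first assertion. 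For cuspidal $S_1^-$ I would additionally impose $Im\,\beta=0$, i.e.\ $c^\prime(u)=0$ (equivalently $L^\prime(u)=0$), and then compute $\eta$ at $c=c^\prime=0$; by the same mechanism that produced $Im\,\eta\propto d^{\prime\prime}$ in the butterfly case (equations \ref{beta^prime}--\ref{eta}), the factor $(1-ic)=1$ now leaves $Re\,\eta$ as the part carrying the $c^{\prime\prime}$-contribution, so $Re\,\eta(u)\neq 0\iff c^{\prime\prime}(u)\neq 0\iff L^{\prime\prime}(u)\neq 0$, using $L^{\prime\prime}(u)=c^{\prime\prime}(u)\gamma^\prime(u)$ when $c=c^\prime=0$.

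The hard part will be the same as before: the second-derivative computation of $\beta^\prime$, and hence of $\eta$, at $c=c^\prime=0$ is lengthy, and one must verify carefully that every term other than the $c^{\prime\prime}$-contribution is real, so that $Re\,\eta$ isolates exactly $c^{\prime\prime}$. This bookkeeping is the analogue of the display in \ref{beta^prime}, and the only genuinely new point to confirm is that replacing $(d-i)$ by $(1-ic)$, which at the singular value equals $-i$ versus $1$, converts the imaginary-part test on $\eta$ in the butterfly case into a real-part test here, and correspondingly the imaginary-part test on $\beta$ into the cross-cap test.
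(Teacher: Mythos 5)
Your proposal is correct and follows exactly the route the paper intends: the paper gives no written proof of this proposition beyond the phrase ``Similar calculation gives the following,'' and your computation with $L=c\,\gamma^\prime$ (in place of $\gamma^\prime=d\,L$), tracking how the factor $(1-ic)$ replaces $(d-i)$ and swaps the real- and imaginary-part tests on $\beta$ and $\eta$, is precisely that similar calculation. The key evaluations $Im\,\beta(u)=c^\prime/\gamma_3^\prime$ and $Re\,\eta(u)\propto c^{\prime\prime}$ at $c=c^\prime=0$ check out against the formulas in the swallowtail subsection.
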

In the table \ref{intro:tab}, we summarized all conditions of propositions \ref{proposition_cuspidaledge}, \ref{proposition_swallowtails} and \ref{propsition_crosscaps}. Using the conditions, it is direct to ﬁnd a maxface with singularities: cuspidal-edge, cuspidal cross caps and cuspidal $S_1^-$ as in the following:
\begin{example}
Let $\delta$ be a null real analytic curve and $\mu$ be a null vector ﬁeld  deﬁned on the interval $I$ such that $\mu=\delta^\prime$, $D(\delta_{12}^\prime,\delta_{12}^{\prime\prime})\neq 0$, and both $\delta$ and $\mu$ are never zero on $I$. Let $a,b$ and $c$ be three diﬀerent real numbers on $I$. Now we construct $\gamma(u) = \delta(u)$ and $L(u) = (u-b)(u-c)^2\mu(u)$. Then $\gamma $ and $L$ are Björling data for the maxface $X_{\gamma,L}$ such that $L(u) = (u-b)(u-c)^2\gamma^\prime(u)$. We see that 
\begin{itemize}
    \item at $a$, $L\neq 0, \gamma^\prime\neq 0$ and $D(\gamma_{12}^\prime, \gamma_{12}^{\prime\prime})\neq 0,$
    \item at $b$, $L=0, L^\prime\neq0, \gamma^{\prime}\neq 0$ and $D(\gamma_{12}^\prime, \gamma_{12}^{\prime\prime})\neq 0$ and
    \item at $c$, $ L=0,L^\prime=0, L^{\prime\prime}\neq0, \gamma^\prime\neq 0$ and $D(\gamma_{12}^\prime, \gamma_{12}^{\prime\prime})\neq 0$.
\end{itemize}
Therefore $a,b$ and $c$ are cuspidal-edge, cuspidal crosscaps and cuspidal $S_1^-$ resp. for the maxface $X_{\gamma,L}$. 

Similarly for three diﬀerent real numbers $m,n$ and $p$ if we take $\gamma^\prime(u) = (u -n )(u-p)^2\delta^\prime(u)$ and $L(u) = \mu(u)$, where $\delta$ and $\mu$ are same as above, then $m,n$ and $p$ are cuspidal-edge, swallowtails and cuspidal butterﬂies resp.
\end{example}
\begin{example}
Let $\gamma(u) = (sin\,u,-cos\,u,u)$ and $L(u) = u(u-1)^2(cos\,u,sin\,u,1)$ be the Bj\"{o}rling data then $-1,0$ and $1$ are cuspidal-edge, cuspidal cross caps and cuspidal $S_1^-$ resp. 
\end{example}

We can construct many such examples.  Moreover as we mentioned in the introduction,  other direct application of the table \ref{intro:tab} is to find a sequence of maxfaces converging to other types of singularities.  We discuss this in the next section.

\section{approximating various singularity by a cuspidal-edge }
We start with an example, that will explain the essence of the main  theorem \ref{main_theorem} of this section. 

Let $X_{\gamma, L}$ be the maxface with the singular Bj\"{o}rling data given by $$\gamma(t)= (0,0,0), \;\;L(t)= (1-t^2, 2t, 1+t^2).$$  The maxface $X_{\gamma, L}$ has shrinking singularity on $I= (-1,1)$ (in fact on $\mathbb R)$.  Moreover this is not a front, but below we will give a sequence of maxface (front) converging to $X_{\gamma, L}$ and having cuspidal edge. 

 For $n>1$, we  define 
\begin{eqnarray*}
&& L_n(t)=\left(1- \frac{1}{n}\right)\left(1-t^2, 2t, 1+t^2\right),\\
&&\gamma_n^\prime(t)=\frac{1}{n}L_n(t).
\end{eqnarray*}

\begin{figure}[h]
\centering
\includegraphics[scale=0.4]{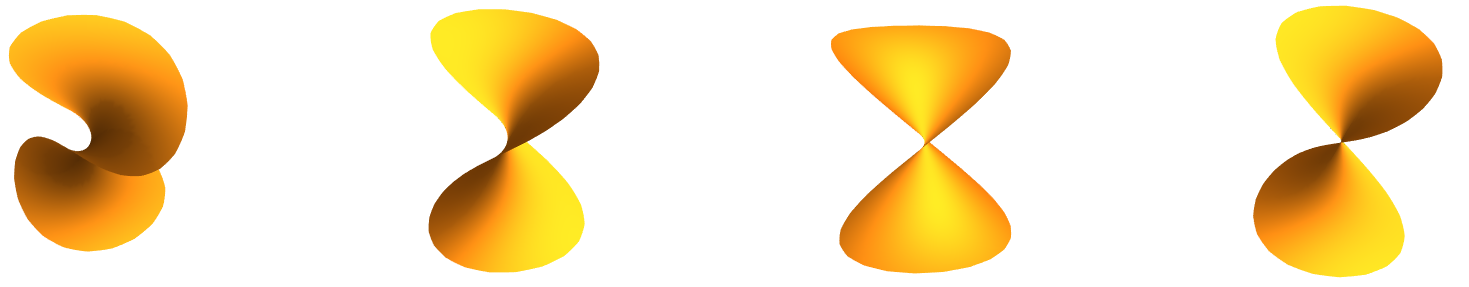}
\caption{Sequence of maxfaces having cuspidal edges that bends to shrinking singularity}
\label{fig:introduction}
\end{figure}
\color{black}
Then for each $n$, the data $\{\gamma_n, L_n\}$ turns out to be a singular Bj\"{o}rling data. Let $X_{\gamma_n, L_n}$ be the corresponding maxface. 
Moreover for each $t\in\mathbb R$, we have $\gamma_n^\prime(t)\neq 0$,  $L_{n}(t)\neq 0$, and  $D(\gamma_{n12}^\prime, \gamma_{n12}^{\prime\prime})\neq 0$. Therefore every point on $\mathbb R$ is a cuspidal edge.   In the figure \ref{fig:introduction}, we have shown the maxfaces $X_{\gamma_n, L_n}$ for $n=3, 5, 15,$ and $50$.\color{black}

In this example we start with a maxface having shrinking singularity and we give a sequence of maxfaces having cuspidal edge ``converging" to the shrinking.  Below we will give general discussion towards this.  First we will define the norm in which we talk about the convergence.

Let $\Omega\subset \mathbb C$ be a bounded simply connected domain, $\overline{\Omega}$ be its closure. Let $X\in C(\overline{\Omega},\mathbb R^3)$, the space of continuous maps. For each $z \in\overline{\Omega}$, we denote
$$
\|X(z)\|:={\rm max}\{X_1(z),X_2(z),X_3(z)\}\,{ \rm and  } \, \|X\|_{\Omega}:=\sup_{z\in{\overline \Omega}}\|X(z)\|.
$$
Here $C(\overline{\Omega},\mathbb R^3)$ becomes a Banach space under the norm $\|.\|_{\Omega}$. 

In the proposition below, we will give a sequence of maxfaces for general $\{\gamma,L\}$.  
\begin{prop}
\label{main_proposition}
Let $X_{\gamma,L}$ be a maxface and for  $t_0 \in I,\gamma^\prime\neq 0,D(\gamma_{12}^\prime, \gamma_{12}^{\prime\prime}) \neq 0$. Then there is a sequence of maxfaces $X_n$ defined in a  neighborhood $\Omega$ of $t_0$ such that each maxface $X_n$  has a cuspidal-edge at $t_0$ and $X_n\to X_{\gamma, L}$  in the norm $\|.\|_\Omega$. 
\end{prop}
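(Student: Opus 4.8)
The plan is to leave the null curve $\gamma$ untouched and to perturb only the null field $L$, adding to it a vanishing multiple of $\gamma'$. Concretely, after shrinking $I$ (and hence the domain $\Omega$) to a neighborhood of $t_0$ on which $\gamma'\neq 0$, I would set $\gamma_n:=\gamma$ and
\begin{equation*}
L_n:=L+\tfrac1n\gamma'.
\end{equation*}
The point of this choice is that on the chosen neighborhood $L$ and $\gamma'$ are proportional (both null and parallel), so $L=c\,\gamma'$ for a real-analytic scalar $c$, whence $L_n=(c+\tfrac1n)\gamma'$ is again a null field proportional to $\gamma'$. Thus $\{\gamma,L_n\}$ is once more a singular Björling data (the Gauss map $g$ is unchanged, so $|g|\not\equiv1$ persists), and it determines a maxface $X_n:=X_{\gamma,L_n}$ through the representation \eqref{bjorling_solution}.

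Next I would verify the cuspidal-edge conditions of Proposition~\ref{proposition_cuspidaledge} at $t_0$. Since $\gamma_n=\gamma$, the two hypotheses $\gamma'(t_0)\neq 0$ and $D(\gamma_{12}',\gamma_{12}'')(t_0)\neq 0$ carry over verbatim, so it only remains to arrange $L_n(t_0)\neq0$. Here $L_n(t_0)=\bigl(c(t_0)+\tfrac1n\bigr)\gamma'(t_0)$, and because $\gamma'(t_0)\neq 0$ this vanishes only when $c(t_0)=-\tfrac1n$; if $L(t_0)=0$ (the interesting case, where $t_0$ is not already a cuspidal edge) then $c(t_0)=0$ and $L_n(t_0)=\tfrac1n\gamma'(t_0)\neq 0$ for every $n$, while in general at most one value of $n$ is excluded. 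Hence for all (sufficiently large) $n$, Proposition~\ref{proposition_cuspidaledge} gives that $X_n$ has a cuspidal edge at $t_0$.

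Finally I would establish the convergence directly from \eqref{bjorling_solution}. Subtracting the two integral representations, the integrands differ only by $-i(L_n-L)=-\tfrac{i}{n}\gamma'$, so
\begin{equation*}
X_n(z)-X_{\gamma,L}(z)=\operatorname{Re}\!\left(-\tfrac{i}{n}\int_{u_0}^{z}\gamma'(w)\,dw\right)=\tfrac1n\operatorname{Im}\bigl(\gamma(z)-\gamma(u_0)\bigr),
\end{equation*}
where $\gamma$ denotes its analytic extension to $\overline\Omega$. Since $\Omega$ is bounded and $\gamma$ is continuous on $\overline\Omega$, each component of this difference is $O(1/n)$ uniformly, giving $\|X_n-X_{\gamma,L}\|_\Omega\to 0$.

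The genuinely delicate step is the construction of $L_n$ rather than the estimate: one must perturb $L$ so as to make it nonzero at $t_0$ and yet still produce admissible Björling data. The key observation that removes the difficulty is that \emph{adding a scalar multiple of $\gamma'$ preserves both nullity and the required proportionality}, because near $t_0$ the vector $\gamma'$ is itself null and parallel to $L$; an arbitrary perturbation would destroy the null condition and fail to yield a maxface. Everything else---validity of the data, the cuspidal-edge test, and the convergence---then follows mechanically, mirroring the swapped-roles construction used in the shrinking-singularity example above.
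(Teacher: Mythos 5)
Your proof is correct, and it is a genuine (and cleaner) simplification of the paper's argument. Both proofs hinge on the same mechanism: near $t_0$ the hypothesis $\gamma'\neq 0$ forces $\gamma_3'\neq 0$, so $L=c\,\gamma'$ with $c=L_3/\gamma_3'$ real-analytic, and replacing $c$ by $c+\tfrac1n$ makes the perturbed null field nonvanishing at $t_0$ (for all but at most one $n$), which is exactly the missing cuspidal-edge condition in Proposition \ref{proposition_cuspidaledge}. The difference is that the paper additionally perturbs the curve itself, setting $\delta_n'=\gamma'+(\tfrac1n,\tfrac1n,h_n)$ with $h_n$ engineered so that $\delta_n'$ stays null, and then must invoke an extra threshold $N_1$ to guarantee $D(\delta_{n12}',\delta_{n12}'')\neq 0$ at $t_0$; you leave $\gamma$ fixed, so the hypotheses $\gamma'(t_0)\neq0$ and $D(\gamma_{12}',\gamma_{12}'')(t_0)\neq0$ transfer verbatim, the Gauss map and hence the condition $|g|\not\equiv1$ are untouched, and the auxiliary function $h_n$ is not needed at all. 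Your convergence step is also sharper: the difference of the representations collapses to the explicit expression $\tfrac1n\operatorname{Im}(\gamma(z)-\gamma(u_0))$, which is uniformly $O(1/n)$ on the compact set $\overline\Omega$, whereas the paper only asserts that the analogous integral tends to zero. For the statement as given, your route proves everything the proposition claims; the paper's more elaborate perturbation buys nothing extra here, though its template (perturbing both members of the data while preserving nullity) is closer in form to the variants needed in Remarks \ref{remark1} and \ref{remark2}.
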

\begin{proof}
As $\gamma^\prime(t_0) \neq 0$,  there is an interval $I_1$ containing $t_0$ such that  for all $t \in I_1$, $\gamma_3^\prime(t) \neq 0$.  Without loss of generality we can assume for all $t\in I_1$, $\gamma_3^\prime(t)>0$. \color{black}    On $I_1$, we deﬁne $c(t) = \dfrac{L_3(t)}{\gamma_3^\prime(t)}$. 

For each $n$, we define $\delta_n$ and $\mu_n$ such that 
\begin{eqnarray*}
&&\delta_n^\prime= \gamma^\prime+\left(\frac{1}{n}, \frac{1}{n}, h_n\right); \\
&&\mu_n=\left(c(t)+\frac{1}{n}\right)\delta_n^\prime.
\end{eqnarray*}
Here $h_n= -\gamma_3^\prime+\sqrt{{\gamma_3^\prime}^2+\color{black}2\left(\frac{1}{n^2}+\frac{\gamma_1^\prime+\gamma_2^\prime}{n}\right)}$.

There is a $N$ and $I_2\subset I_1$ containing $t_0$ such that for all $t\in I_2$ and $n>N$,  ${\gamma_3^\prime}^2+\color{black}2\left(\frac{1}{n^2}+\frac{\gamma_1^\prime+\gamma_2^\prime}{n}\right)\neq 0$, and $\frac{1}{n}\neq -c(t_0)$. 

For $n>N$, $\{\delta_n, \mu_n\}$ turn out to be a singular Bj\"{o}rling data  on $I_2$.  These can be extended analytically on some domain $\mathcal{U}$ that contains $I_2$.  We take $\Omega$ a bounded simply connected domain containing $t_0$, such that $\overline{\Omega} \subset\mathcal{U}$.

Moreover we see that at $t_0$ and $n>N_1>N \color{black}$,  $\delta_n^\prime\neq 0$, $\mu_n\neq 0$, and  $$D(\delta_{n12}^\prime,\delta_{n12}^{\prime\prime})\neq 0.
$$ 

Therefore  for $n>N_1\color{black}$, the maxfaces $X_{\delta_n, \mu_n}$  for the singular Bj\"{o}rling data $\{\delta_n, \mu_n\}$,  has cuspidal-edge at $t_0$ and hence cuspidal edge in an interval containing $t_0$.
 
Let $z\in \Omega,$ we have $X_{\delta_n, \mu_n}(z) -X_{\gamma, L}(z)=Re\int_{u_0}^{z}(\delta_n^\prime(w)-\gamma^\prime(w))(1-i(c(w)+\frac{1}{n}))\,dw.$   It is direct to see that 
$\|X_{\delta_n, \mu_n} -X_{\gamma, L}\|_\Omega\to 0$.

\end{proof}

\begin{remark}\label{remark1}
If we have a maxface $X_{\gamma, L}$ with $L\neq0$ and $ D(L_{12}, L_{12}^\prime)\neq0$ at $0$, then with a little change we have a sequence of functions $g_n$ (similar to $h_n$ in the above proposition) and we can take 
$$
\mu_n= L+\left(\frac{1}{n}, \frac{1}{n}, g_n\right)  \mbox{  and  }
\delta_n^\prime=\left(d(t)+\frac{1}{n}\right)\mu_n.
$$
With similar argument as above  we  find a sequence of maxfaces $X_{\delta_n, \mu_n}$ with singular Bj\"{o}rling data $\{\delta_n, \mu_n\}$ having cuspidal-edge at $0$  and $X_{\delta_n, \mu_n}\to X_{\gamma, L}$ in the norm $\|.\|_{\Omega}.$
\end{remark}

\begin{remark}\label{remark2}
For a constant null curve $\gamma$ and a null vector field  $L$ such that for all $t$,  $D(L_{12}, L_{12}^\prime)\neq 0$. The maxface $X_{\gamma,L}$ has shrinking singularity.   For this case we can choose $\gamma_n$ and $L_n$ similar to the example at the beginning of the section.  Moreover little variation will hold for the folded singularity as well. 
\end{remark}
We conclude the article with the following theorem which is a direct consequence of the  above proposition \ref{main_proposition} and remarks \ref{remark1}, \ref{remark2}. 

\begin{theorem}
\label{main_theorem}
Let $X_{\gamma,L}$ be the maxface with singular Bj\"{o}rling data $\{\gamma,L\}$ such that at $t_0 \in I$, it has  shrinking or folded singularity or any of the singular point as in table \ref{intro:tab}.    Then  there is a sequence of maxfaces $X_n$ defined on a domain $\Omega$ containing $t_0$, such that  each $X_n$ has cuspidal-edge at $t_0$. Moreover $X_n\to X_{\gamma, L}$ in norm $\|.\|_\Omega$. 
\end{theorem}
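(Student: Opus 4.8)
The plan is to reduce the theorem entirely to the three constructions already established: Proposition~\ref{main_proposition}, Remark~\ref{remark1}, and Remark~\ref{remark2}. Each of these, under a single hypothesis on the Björling data at $t_0$, produces a sequence of maxfaces with a cuspidal-edge at $t_0$ that converges to $X_{\gamma,L}$ in $\|\cdot\|_\Omega$. Hence the only real work is to verify that, for \emph{every} singularity type in the statement, the data $\{\gamma,L\}$ satisfies the hypothesis of at least one of these results at $t_0$. This is a finite case analysis driven by Table~\ref{intro:tab}, equivalently by Propositions~\ref{proposition_cuspidaledge}, \ref{proposition_swallowtails}, and \ref{propsition_crosscaps}.

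First I would split the five singularities of the table into two groups according to which construction applies. Proposition~\ref{main_proposition} requires $\gamma^\prime\neq0$ and $D(\gamma_{12}^\prime,\gamma_{12}^{\prime\prime})\neq0$ at $t_0$; reading the table, these both hold for the cuspidal-edge, the cuspidal crosscaps, and the cuspidal $S_1^-$, so for each of these I would invoke Proposition~\ref{main_proposition} directly to obtain $X_n$. Remark~\ref{remark1} instead requires $L\neq0$ and $D(L_{12},L_{12}^\prime)\neq0$ at $t_0$; again from the table this is exactly the situation for the swallowtails and the cuspidal butterflies (where $\gamma^\prime=0$ but $L\neq0$ and $D(L_{12},L_{12}^\prime)\neq0$), as well as for the cuspidal-edge. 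Applying Remark~\ref{remark1} in these cases completes the table: every listed singularity falls under at least one of the two constructions.

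Next, for the shrinking and folded singularities I would appeal to Remark~\ref{remark2}. A shrinking singularity at $t_0$ corresponds, after the normalization used there, to $\gamma$ locally constant together with a nonvanishing null field $L$ satisfying $D(L_{12},L_{12}^\prime)\neq0$, which is precisely the case treated in Remark~\ref{remark2}; the folded case follows from the stated variation of the same construction. In all situations the convergence $X_n\to X_{\gamma,L}$ in $\|\cdot\|_\Omega$ is not re-proved but is inherited from the chosen construction, where it was obtained from the estimate on $Re\int_{u_0}^{z}(\delta_n^\prime-\gamma^\prime)\,(1-i(c+\frac{1}{n}))\,dw$.

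The only genuinely delicate point is bookkeeping rather than analysis: I must make sure that the condition I read off Table~\ref{intro:tab} is literally the one demanded by the construction I invoke, and that it holds at the specific point $t_0$ rather than merely generically. Once the correct group is selected, the fact that the approximating $X_n$ has a cuspidal-edge at $t_0$ is immediate from Proposition~\ref{proposition_cuspidaledge}, because the constructions were arranged so that $\delta_n^\prime\neq0$, $\mu_n\neq0$, and $D(\delta_{n12}^\prime,\delta_{n12}^{\prime\prime})\neq0$ at $t_0$. I therefore expect no estimates beyond those already contained in Proposition~\ref{main_proposition} and the two remarks.
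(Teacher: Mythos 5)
Your proposal is correct and follows essentially the same route as the paper, which states the theorem as a direct consequence of Proposition~\ref{main_proposition} and Remarks~\ref{remark1} and \ref{remark2}; your explicit case analysis (cuspidal-edge, crosscaps, and $S_1^-$ via Proposition~\ref{main_proposition}; swallowtails and butterflies via Remark~\ref{remark1}; shrinking and folded via Remark~\ref{remark2}) is exactly the bookkeeping the paper leaves implicit, and it checks out against Table~\ref{intro:tab}.
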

\section{acknowledgement}
Authors are very thankful to the anonymous referees for their valuable comments which helped a lot to improve the article. 
\medskip

\bibliography{main}
\end{document}